\definecolor{darkgreen}{rgb}{0,0.5,0}
\newtheorem{theorem}{Theorem}[section]
\newtheorem{lemma}[theorem]{Lemma}
\newtheorem{corollary}[theorem]{Corollary}
\theoremstyle{definition}
\newtheorem{definition}[theorem]{Definition}
\newtheorem{example}[theorem]{Example}
\newtheorem{remark}[theorem]{Remark}
\newcommand{\bunderline}[1]{\underline{#1\mkern-2mu}\mkern2mu }
\def\du {\bar{d}}
\def\dl {\bunderline{d}}
\def\spinc {{\operatorname{spin^c}}}
\def\CF {\mathit{CF}}
\newcommand \CFm {\CF^-}
\let\int\relax
\newcommand{\int}{\mathring}
\DeclareMathSymbol{\wtilde}{\mathord}{largesymbols}{"65}
\mathchardef\mhyphen="2D
\newcommand{\gr}{\text{gr}}
\newcommand{\Vl}{\underline{V}_0}
\newcommand{\Vu}{\overline{V}_0}
\title{A note on cables and the involutive concordance invariants}
\author{Kristen Hendricks}
\address{Department of Mathematics\\Rutgers University\\  New Brunswick, NJ 08854}
\email{kristen.hendricks@rutgers.edu}
\thanks{KH was partially supported by NSF CAREER grant DMS-2019396.}
\author{Abhishek Mallick}
\address{Department of Mathematics\\Rutgers University\\  New Brunswick, NJ 08854}
\email{abhishek.mallick@rutgers.edu}
\thanks{AM was partially supported by NSF CAREER grant DMS-2019396.}
\subjclass[2020]{57K18}
\begin{document}
\begin{abstract}
 We prove a formula for the involutive concordance invariants of the cabled knots in terms of those of the companion knot and the pattern knot. As a consequence, we show that any iterated cable of a knot with parameters of the form (odd,1) is not smoothly slice as long as either of the involutive concordance invariants of the knot is nonzero. Our formula also gives new bounds for the unknotting number of a cabled knot, which are sometimes stronger than other known bounds coming from knot Floer homology.
\end{abstract}

\maketitle

\section{Introduction}
Cabling is a natural operation on a knot which acts on the smooth concordance group. There has been considerable interest in characterizing the behavior of various knot concordance invariants under cabling. Typically, one hopes to prove a formula which relates the values of some concordance invariant of a cabled knot to the value of the invariant on the companion knot and the value of the invariant on the pattern knot, in this case a torus knot. Some examples of invariants which are known to admit such formulas include the Levine-Tristram signatures \cite{Litherland:torus}, the Heegaard Floer $\tau$-invariant and $\epsilon$-invariant  \cite{Hedden_cabling, Petkova_cabling, Hom_cabling}, Rasmussen's $V_0$-invariant \cite{Hom_Wu}, and the Heegaard Floer $\nu^{+}$-invariant \cite{Wu_cabling}, among others. 

Recently, concordance invariants stemming from the involutive variant of Heegaard Floer homology defined by the first author and Manolescu \cite{HM} have been shown to be fruitful in many applications. In this paper we will be interested in the concordance invariants $\Vl(K)$ and $\Vu(K)$, which can be thought of as the involutive analog of Rasmussen's $V_0(K)$ invariant \cite[Subsection 6.7]{HM}. An early indication of the utility of these invariants was that the involutive concordance invariants are able to detect non-sliceness of certain rationally slice knots. Later, equivariant refinements of these invariants defined by the second author, Dai, and Stoffregen \cite{DMS, mallick2022knot} were used in  to show that $(2,1)$-cable of the figure-eight knot is not smoothly slice \cite{dai20222}.

In this article we give the cabling formula for the involutive concordance invariants. Let $K_{p,q}$ denote the $(p,q)$-cable of a knot $K$, and $V_s$ represent the generalization \cite{ni2015cosmetic} of Rasmussen's $V_0$-invariant \cite[Definition 7.1]{Rasmussen:V0}.

\begin{theorem}\label{thm: intro_formula}
Let $p$ and $q$ be positive integers with $(p,q)=1$. Then the involutive concordance invariants satisfy the following relations:
\begin{enumerate}

\item If $p$ is odd, we have
 \[
\Vl(K_{p,q})= \Vl(K) + V_0(T_{p,q}) , \; \; \Vu(K_{p,q})= \Vu(K) + V_0(T_{p,q}).
 \]
 
 \item If $p$ is even,
 \[
\Vl(K_{p,q})= \mathrm{max}\{ V_{\left \lfloor \frac{s}{p} \right \rfloor }(K), V_{\left \lfloor \frac{p+q -1-s}{p} \right \rfloor }(K) \} + V_0(T_{p,q})  , \; \; \Vu(K_{p,q})= V_0(T_{p.q}),
 \]
 \end{enumerate}
where $s \equiv \frac{p+q-1}{2} (\mathrm{mod} \; q) $ and $0\leq s \leq q-1$.

\end{theorem}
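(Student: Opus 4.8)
The plan is to express both involutive invariants of $K_{p,q}$ through the conjugation symmetry on its knot Floer complex, to compute that complex and that symmetry by a satellite formula, and then to read off $\Vl$ and $\Vu$ from the involutive large surgery formula. \textbf{Step 1 (reduction).} Recall that $\Vl(J)$ and $\Vu(J)$ are determined by the pair $(A_0^-(J),\iota_J)$, where $A_0^-(J)=C\{\max(i,j)\le 0\}\subset CFK^\infty(J)$ and $\iota_J$ is the conjugation involution, via the involutive large surgery formula of Hendricks--Manolescu: for $n\gg 0$,
\[
\underline d\bigl(S^3_n(J),[0]\bigr)=\tfrac{n-1}{4}-2\,\Vl(J),\qquad \bar d\bigl(S^3_n(J),[0]\bigr)=\tfrac{n-1}{4}-2\,\Vu(J).
\]
Thus it suffices to pin down the $\iota$-equivariant filtered homotopy type of $\bigl(A_0^-(K_{p,q}),\iota_{K_{p,q}}\bigr)$ in the self-conjugate $\mathrm{spin}^c$ structure. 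Equivalently --- and this is where the auxiliary invariants $V_s(K)$ and the residue $s\equiv\frac{p+q-1}{2}\pmod q$ will enter --- one may route everything through the (involutive) mapping cone formula, since the cable complement is built from $K\sqcup(\text{core})$ by gluing in the complement of $T_{p,q}$ in the solid torus.

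\textbf{Step 2 (the complex of the cable).} Apply the cabling formula for the knot Floer complex (Hedden; Hom's bordered computation; Hanselman--Watson via immersed curves): $CFK^\infty(K_{p,q})$ is assembled from $p$ appropriately shifted copies of $CFK^\infty(K)$, glued according to the combinatorics determined by $q$ and the cabling framing, together with the staircase complex of the L-space knot $T_{p,q}$. Truncating to $A_0^-$ then identifies $A_0^-(K_{p,q})$, up to filtered homotopy equivalence, with a complex assembled from the $A_s^-(K)$ for $s$ in the stated residue class --- the shift $\tfrac{p+q-1}{2}$ being half of $pq-2g(T_{p,q})=p+q-1$ --- together with the explicit complex $A_0^-(T_{p,q})$, whose $U$-tower contributes an overall grading shift by $-2V_0(T_{p,q})$. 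Since $T_{p,q}$ is an L-space knot, its involution is the standard staircase reflection and $\Vl(T_{p,q})=\Vu(T_{p,q})=V_0(T_{p,q})$, which is why the term $+V_0(T_{p,q})$ appears uniformly in all cases.

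\textbf{Step 3 (the involution; the crux).} It remains to track $\iota_{K_{p,q}}$ through the identification of Step 2: in the immersed-curve model it is simply the elliptic involution of the marked torus applied to the cabled multicurve, and on the surgery side it is the induced conjugation on the mapping cone. The conjugation acts on the $p$ sheets through the involution $j\mapsto (p-1)-j$ of the index set. When $p$ is odd there is a fixed central sheet, which carries $A_0^-(K)$ and on which $\iota_{K_{p,q}}$ restricts (up to filtered homotopy) to $\iota_K$, while the remaining $p-1$ sheets are interchanged in pairs and so are invisible to the correction terms; hence $\bigl(A_0^-(K_{p,q}),\iota_{K_{p,q}}\bigr)$ is equivariantly equivalent to $\bigl(A_0^-(K),\iota_K\bigr)$ up to the pattern shift, giving the first set of formulas. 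When $p$ is even there is no fixed sheet: instead $\iota_{K_{p,q}}$ interchanges the sheet carrying $A_{\lfloor s/p\rfloor}^-(K)$ with the one carrying $A_{\lfloor (p+q-1-s)/p\rfloor}^-(K)$. A conjugation of this swapping type degenerates the involutive correction terms in the familiar way: the upper invariant $\Vu$ becomes insensitive to the companion, since the image of $1+\iota_{K_{p,q}}$ already hits the relevant tower, leaving only the pattern's $V_0(T_{p,q})$; whereas the lower invariant $\Vl$ is controlled by the worse of the two swapped sheets, namely $\max\{V_{\lfloor s/p\rfloor}(K),V_{\lfloor (p+q-1-s)/p\rfloor}(K)\}$, plus the pattern shift. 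Feeding these equivariant models into Step 1 yields the stated formulas.

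The main obstacle is Step 3: making rigorous the identification of $\iota_{K_{p,q}}$ --- both the assertion that for odd $p$ the central sheet carries $\iota_K$ on $A_0^-(K)$, and the precise sheet-swap for even $p$ --- together with the $\mathrm{spin}^c$ bookkeeping that produces the residue $s$ and the floor functions. I would expect this to be cleanest either in the bordered-involutive framework, pairing against the (involutive) type-$D$ module of the $(p,q)$-cabling pattern, or in the immersed-curve picture equipped with its equivariant elliptic involution; in either incarnation the parity dichotomy reflects exactly whether the $p$-fold cyclic structure underlying the cable admits a fixed sheet compatible with conjugation. Once a concrete $\iota$-equivariant model of $\bigl(A_0^-(K_{p,q}),\iota_{K_{p,q}}\bigr)$ is available, extracting $\Vl$ and $\Vu$ is routine.
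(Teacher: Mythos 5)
Your proposal is the ``direct approach'' that the paper explicitly declines to take, and its load-bearing step is exactly the one you flag as the main obstacle and do not carry out. Determining the conjugation involution $\iota_{K_{p,q}}$ on the knot Floer complex of a general cable is not something the existing cabling machinery (bordered Floer, immersed curves) provides: there is no involutive bordered pairing theorem or equivariant immersed-curve calculus available that would let you identify the induced action on the $p$ sheets, and in general $\iota_K$ has only been computed for complexes simple enough to be pinned down by formal algebraic properties. Your Step 3 assertions --- that for odd $p$ the central sheet carries $\iota_K$ and the interchanged pairs are ``invisible to the correction terms,'' and that for even $p$ the swap produces exactly the stated $\max$ --- are plausible-sounding but unsubstantiated, and the second one is not even the right mechanism: in the even case the $\max\{V_{\lfloor s/p\rfloor}(K),V_{\lfloor(p+q-1-s)/p\rfloor}(K)\}$ already appears in the \emph{ordinary} Ni--Wu $d$-invariant formula for a single $\mathrm{spin}^c$ structure on $S^3_{q/p}(K)$, not from an involutive pairing of two sheets. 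As written, the proposal is a program whose key step is open, not a proof.

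The paper sidesteps all of this. It uses the classical diffeomorphism $S^3_{pq}(K_{p,q}) \cong S^3_{q/p}(K)\,\#\,L(p,q)$, the involutive connected sum formula (the lens space summand contributes only a grading shift since its iota-complex is $(\mathbb F_2[U],\mathrm{id})$), and the consequences of the involutive surgery formula of Hendricks--Hom--Stoffregen--Zemke: $\dl$ and $\du$ of $pq$-surgery on $K_{p,q}$ in the self-conjugate $\mathrm{spin}^c$ structure compute $\Vl(K_{p,q})$ and $\Vu(K_{p,q})$, while $\dl$ and $\du$ of $q/p$-surgery on $K$ compute either $\Vl(K),\Vu(K)$ (when the relevant self-conjugate $\mathrm{spin}^c$ structure is $[\frac{p-1}{2}]$, i.e.\ $p$ odd) or reduce to ordinary $d$-invariants and hence to the $V_s(K)$ via Ni--Wu (when it is $[\frac{p+q-1}{2}]$, i.e.\ $p$ even). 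Equating the two computations and normalizing against the unknot yields the theorem without ever touching $CFK^\infty(K_{p,q})$ or $\iota_{K_{p,q}}$. If you want to salvage your approach you would need to prove an involutive satellite or bordered pairing theorem first, which is a substantially harder problem than the one at hand.
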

\noindent
Theorem~\ref{thm: intro_formula} implies that the involutive concordance invariants are determined by the knot Floer homology  when the longitudinal winding parameter $p$ is even. On the other hand, when $p$ is odd, the involutive concordance invariants incorporate the corresponding invariants for the companion knot. 

Theorem~\ref{thm: intro_formula} implies the following corollary. Let $K_{p_1,q_1; p_2, q_2; p_3, q_3; \cdots ; p_k,q_k}$
represent the iterated cable of the knot $K$. For example, $K_{p_1,q_1; p_2, q_2}$ is the $(p_2,q_2)$-cable of the $(p_{1},q_{1})$-cable of $K$.
\begin{corollary}\label{cor:intro_ite_cable} If the parameters $p_i$ are all odd and positive, and either $\Vu(K)$ or $\Vl(K)$ is non-zero, then $K_{p_1,1; p_2,1; p_3,1; \cdots ; p_k,1}$ is not smoothly slice.
\end{corollary}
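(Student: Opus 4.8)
The plan is to bootstrap from the cabling formula of Theorem~\ref{thm: intro_formula}(1), using the triviality of the relevant pattern torus knots together with the vanishing of the involutive concordance invariants on slice knots. Since all the content is already encoded in Theorem~\ref{thm: intro_formula}, the deduction is short; I outline it as three steps.

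First I would record that the patterns appearing here are unknotted: for every positive integer $p$, the torus knot $T_{p,1}$ is the unknot, so $V_0(T_{p,1})=0$. Next, for each $i$ the pair $(p_i,1)$ satisfies the hypotheses of Theorem~\ref{thm: intro_formula}(1) — namely $p_i,1$ are positive, $\gcd(p_i,1)=1$, and $p_i$ is odd — and the theorem is stated for an arbitrary companion knot. Hence, applying part (1) with $q=1$ to an arbitrary knot $J$ gives
\[
\Vl(J_{p,1}) = \Vl(J) + V_0(T_{p,1}) = \Vl(J), \qquad \Vu(J_{p,1}) = \Vu(J).
\]
Now I would iterate this identity along the tower $K \rightsquigarrow K_{p_1,1} \rightsquigarrow K_{p_1,1;\,p_2,1} \rightsquigarrow \cdots \rightsquigarrow K_{p_1,1;\cdots;p_k,1}$; at the $i$-th stage the companion is the previously constructed cable and the pattern parameter $p_i$ is odd, so the formula applies verbatim. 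Telescoping the resulting $k$ equalities yields
\[
\Vl\big(K_{p_1,1;\,p_2,1;\,\cdots;\,p_k,1}\big) = \Vl(K), \qquad \Vu\big(K_{p_1,1;\,p_2,1;\,\cdots;\,p_k,1}\big) = \Vu(K).
\]

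Finally I would invoke the fact that $\Vl$ and $\Vu$ are concordance invariants that vanish on the unknot \cite[Subsection 6.7]{HM}, and hence vanish on every smoothly slice knot. Thus, if $K_{p_1,1;\cdots;p_k,1}$ were smoothly slice, the displayed equalities would force $\Vl(K)=\Vu(K)=0$, contradicting the hypothesis that one of them is nonzero; therefore the iterated cable is not smoothly slice. The only point that needs checking is the legitimacy of applying Theorem~\ref{thm: intro_formula} at each step of the iteration, but since the theorem holds for any companion and the ``all $p_i$ odd'' condition is preserved down the tower, this is immediate, and there is no substantive obstacle in the corollary itself beyond the bookkeeping above — the real work lives in Theorem~\ref{thm: intro_formula}.
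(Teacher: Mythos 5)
Your proposal is correct and follows exactly the paper's argument, which simply applies Theorem~\ref{thm: intro_formula} recursively; your write-up just makes explicit the bookkeeping (that $V_0(T_{p,1})=0$ and that $\Vl,\Vu$ vanish on slice knots) that the paper leaves to the reader.
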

Corollary~\ref{cor:intro_ite_cable} is related to a famous open question by Miyazaki \cite[Question 3]{Miyazaki} which asks whether there are non-slice knots $K$ for which the cable knot $K_{p,1}$ is slice.

\begin{remark} It is possible to produce examples of topologically (and therefore algebraically) slice knots which are trivial with respect to ordinary Heegaard Floer homology and involutively nontrivial, for which the corollary above detects nonsliceness of the cable; for example, one such family appears in \cite[Theorem 1.1]{HKP:slice}. The authors are, however, not presently aware of a family of examples which are not better elucidated by other methods.
\end{remark}

We now explain the strategy for the proof of Theorem~\ref{thm: intro_formula}. A direct approach would be to determine the full involutive knot Floer invariant of the cabled knot, which takes the form of the knot Floer chain complex of Ozsv{\'a}th-Szab{\'o} and Rasmussen $\mathcal{CKF(K)}$ together with the knot conjugation symmetry $\iota_K$ on the complex. The involutive concordance invariants of the knot may then in principle be extracted from this data. However, it is challenging to compute the full knot Floer chain complex for a general cabled knot, and additionally challenging to determine the symmetry $\iota_K$. In general, direct computations of $\iota_K$ have only been carried out for simple chain complexes in which the map is determined by its algebraic properties, and for tensor products of those complexes.

Instead, our proof takes an indirect approach, using certain consequences of the surgery formula in involutive Heegaard Floer homology proved by Hom-Stoffregen-Zemke and the first author \cite{hendricks2020surgery} to deduce the formulas presented in Thoerem~\ref{thm: intro_formula}. This approach is similar to one used by Ni-Wu to compute the Heegaard Floer correction terms of three-manifolds using surgery formulas \cite{ni2015cosmetic}. 

\subsection{Applications to the unknotting number}
We now discuss an application of Theorem~\ref{thm: intro_formula} to the unknotting number. Recall that the unknotting number $u(K)$ of a knot $K$ is the minimum number of times the knot must be passed through itself to turn it into an unknot. There are many well-known bounds in the literature for the unknotting number of a knot. In one recent example, in \cite{AE:unknotting}, Alishahi and Eftekhary gave a bound using the torsion order of the $HFK^{-}$ flavor of the knot Floer homology. This bound was later used by Hom, Lidman, and Park \cite{HLP:cables} to give bounds for the unknotting number of cabled knots. In particular, the authors showed that
\[
u(K_{p,q}) \geq p.
\]
It is also well-known that the unknotting number is greater than or equal to the slice genus of the knot. Hence using the relationship of Rasmussen's $V_0$ invariant to the slice genus \cite{Rasmussen:V0}, together with the cabling formula for the $V_0$-invariant \cite{Hom_Wu}, one can also write down the bounds
\[
u(K_{p,q}) \geq 2 V_0(K) + 2 V_0(T_{p,q}), \; \text{if} \; g_4(K_{p,q}) \; \text{is even}\]
and
\[u(K_{p,q}) \geq 2 V_0(K) + 2 V_0(T_{p,q}) -1, \; \text{if} \; g_4(K_{p,q}) \; \text{is odd}.
\]
In particular, without knowing the slice genus of the knot, we have that
\[
u(K_{p,q}) \geq 2 V_0(K) + 2 V_0(T_{p,q}) -1.
\]

Using Theorem~\ref{thm: intro_formula}, we improve the above bounds:
\begin{theorem}\label{thm: intro_unknotting}
Let $K_{p,q}$ be a cabled knot with $p$ odd. Then
\[u(K_{p,q}) \geq 2\Vl(K) + 2V_0(T_{p,q}) - 2 \text{ and } u(K_{p,q}) \geq -2\Vu(K) - 2V_0(T_{p,q}) - 2. \]
\end{theorem}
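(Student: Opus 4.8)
The plan is to bootstrap the two inequalities from the cabling formula of Theorem~\ref{thm: intro_formula} together with the classical bound $u(J)\ge g_4(J)$ and a slice--genus bound for the involutive concordance invariants. Concretely, I would first establish the knot--independent statements that for \emph{every} knot $J$ one has $u(J)\ge 2\Vl(J)-2$ and $u(J)\ge -2\Vu(J)-2$, and then specialize to $J=K_{p,q}$ with $p$ odd. At that point Theorem~\ref{thm: intro_formula}(1) replaces $\Vl(K_{p,q})$ by $\Vl(K)+V_0(T_{p,q})$ and $\Vu(K_{p,q})$ by $\Vu(K)+V_0(T_{p,q})$, which is exactly the content of Theorem~\ref{thm: intro_unknotting}.

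For the knot--independent statements I would argue in two steps. First, the classical inequality $u(J)\ge g_4(J)$: the track in $S^3\times[0,1]$ of the $u(J)$ crossing changes turning $J$ into the unknot is an immersed annulus with $u(J)$ double points, and capping the unknot with a disk in $B^4$ and resolving the double points exhibits $J$ as the boundary of a smoothly embedded genus-$u(J)$ surface. Second, the involutive slice--genus bound. Using the identification of $-2\Vl(J)$ and $-2\Vu(J)$ with the involutive correction terms $\dl$ and $\du$ of $+1$--surgery on $J$ (sign conventions aside) and the four--manifold obstruction these invariants satisfy, proved by Hendricks--Manolescu \cite{HM} (see also \cite{hendricks2020surgery}), one obtains $\Vl(J)\le \lceil \tfrac{g_4(J)+1}{2}\rceil$ and $\Vu(J)\ge -\lceil \tfrac{g_4(J)+1}{2}\rceil$. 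These are the involutive counterparts of Rasmussen's bound $V_0(J)\le \lceil \tfrac{g_4(J)}{2}\rceil$, shifted by one in the argument of the ceiling; a short case analysis on the parity of $g_4(J)$ turns them into the uniform estimates $2\Vl(J)\le g_4(J)+2$ and $-2\Vu(J)\le g_4(J)+2$. Chaining the two steps yields $u(J)\ge g_4(J)\ge 2\Vl(J)-2$ and similarly for $\Vu$. If one knows in addition that $g_4(K_{p,q})$ is even, the parity analysis sharpens the $-2$ to $-1$, exactly parallel to the refinement of the $V_0$--bound recalled above; since the slice genus of the cable is not assumed, we record the uniform bound.

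The main obstacle is the second step: pinning down the involutive slice--genus bound with the correct additive constant. The subtlety is that the obstruction coming from $\dl$ and $\du$ is strictly weaker than the one coming from $d$, so one must track the extra additive shift carefully in the four--manifold inequality — it is precisely this shift that produces the $-2$ in Theorem~\ref{thm: intro_unknotting} where the $V_0$--based bound has a $-1$. If a citable statement in exactly this form is not available, I would prove it directly: push a genus-$g_4(J)$ slice surface for $J$ into $B^4$, perform the standard blow--ups to exhibit a negative--definite four--manifold bounded by $S^3_{+1}(J)$ whose second Betti number is controlled by $g_4(J)$, and apply the involutive negative--definite obstruction of \cite{HM, hendricks2020surgery}, keeping a close watch on the constants. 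Once this bound is established, the remainder of the argument is the purely formal substitution via Theorem~\ref{thm: intro_formula} described in the first paragraph.
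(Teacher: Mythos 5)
Your proposal is correct and follows essentially the same route as the paper: the paper likewise combines $u(J)\ge g_4(J)$ with the involutive slice-genus bound $-\lceil (g_4(J)+1)/2\rceil \le \Vu(J)\le \Vl(J)\le \lceil (g_4(J)+1)/2\rceil$ --- which it simply cites from Juh\'asz--Zemke rather than rederiving via a negative-definite obstruction --- and then substitutes the cabling formula of Theorem~\ref{thm: intro_formula}. One small slip in your aside: the sharpening from $-2$ to $-1$ occurs when $g_4(K_{p,q})$ is \emph{odd}, not even (if $g_4=2k+1$ then $\lceil (g_4+1)/2\rceil = k+1$ yields $2\Vl\le g_4+1$), though this does not affect the uniform bound that the theorem actually asserts.
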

More precisely, one has

\begin{enumerate}
\item $u(K_{p,q}) \geq 2\Vl(K) + 2V_0(T_{p.q}) - 1 \text{ and } u(K_{p,q}) \geq -2\Vu(K) - 2V_0(T_{p,q}) - 1, \; \; g_4(K_{p,q}) \; \text{odd}$,
\item $u(K_{p,q}) \geq 2\Vl(K) + 2V_0(T_{p,q}) - 2 \text{ and } u(K_{p,q}) \geq -2\Vu(K) - 2V_0(T_{p,q}) - 2  ,\;\; g_4(K_{p,q}) \; \text{even}$
\end{enumerate}

\noindent although of course one in general one would like to apply Theorem~\ref{thm: intro_unknotting} in situations where the slice genus is unknown.

\begin{remark} A statement similar to Theorem~\ref{thm: intro_unknotting} could also be made for the 4-dimensional positive clasp number of a knot in place of the unknotting number. In another direction, because $\Vl$ and $\Vu$ are concordance invariants, the bound from Theorem~\ref{thm: intro_unknotting} is in fact a bound on the concordance unknotting number of $K_{p,q}$, which is to say the minimum unknotting number of a knot $J$ concordant to $K_{p,q}$.\end{remark}

\noindent

In Example \ref{example:beats-previous} we show that there are infinitely many knots for which the bound from Theorem~\ref{thm: intro_unknotting} is stronger than that from \cite{HLP:cables} or the aforementioned bound from the $V_0$-invariant. 
\subsection{Organization}
This paper is organized as follows. In Section~\ref{sec:background} we review the definition of the involutive concordance invariants and structural features of the involutive surgery formula. We then prove Theorem \ref{thm: intro_formula}, Corollary \ref{cor:intro_ite_cable}, and Theorem \ref{thm: intro_unknotting} in Section~\ref{sec:proofs}. Finally, in Section~\ref{sec:examples} we consider an example in which our formula improves on other bounds from knot Floer homology.

\subsection{Acknowledgments}
We thank Jen Hom, Sungkyung Kang, Tye Lidman, JungHwan Park, and Ian Zemke for helpful comments. Portions of this work were carried out while the second author was in attendance at a Simons Semester on ``Knots, Homologies and Physics'' at the Institute of Mathematics of the Polish Academy of Science; other portions were carried out while the first author was present at the BIRS workshop ``What's your Trick?: A nontraditional conference in low-dimensional topology.'' We are grateful to both institutions for their hospitality. Finally, we thank the anonymous referee for helpful comments.

\section{Background on involutive Heegaard Floer homology} \label{sec:background}
In this section, we briefly recall the definition of the involutive knot concordance invariants of a knot and the involutive correction terms of a three-manifold from \cite{HM} and their relationship to surgeries on the knot \cite{hendricks2020surgery}. We begin by recalling the algebraic setup.

\begin{definition} \label{def:iota_complex} An $\emph{iota-complex}$ is a pair $(C,\iota)$ of the following form:
\begin{itemize}
\item $C$ is an absolutely $\mathbb Q$-graded and relatively $\mathbb{Z}$-graded finitely-generated chain complex over $\mathbb{F}_2[U]$ with the property that $U^{-1}H_*(C) \simeq \mathbb{F}_2[U,U^{-1}]$ with some grading shift.
\item $\iota$ is a grading-perserving chain map such that $\iota^2$ is chain homotopic to the identity.
\end{itemize}
\end{definition}

To a rational homology sphere $Y$ together with a $\spinc$ structure $\mathfrak s$ on $Y$, Ozsv{\'a}th and Szab{\'o}'s Heegaard Floer homology associates a chain complex $\CFm(Y,\mathfrak s)$ \cite{OS:holomorphicdisks, OS:properties}; if $\mathfrak{s}$ is conjugation-invariant, involutive Floer homology appends to this data a chain map $\iota$. The pair $(\CFm(Y,\mathfrak s), \iota)$ is an iota-complex in the sense of Definition~\ref{def:iota_complex}.

The involutive Heegaard Floer invariants satisfy a tidy connected sum formula, as follows. 
\begin{equation}\label{eqn:connect_sum}
\CFm(Y_1 \# Y_2, \mathfrak s_1 \# \mathfrak s_2, \iota) \simeq (\CFm(Y_1, \mathfrak s_1) \otimes \CFm(Y_2, \mathfrak s_2), \iota_1 \otimes \iota_2).
\end{equation}
That is, the two complexes are chain homotopy equivalent via maps which commute up to homotopy with the involutions in their pairs. This relationship is sometimes called a \emph{strong equivalence}. The statement for the chain complexes is due to \cite[Section 6]{OS:properties} and the statement for the involution is due to \cite[Theorem 1.1]{HMZ}.

In ordinary Heegaard Floer homology, the \emph{$d$-invariant} of a chain complex $C$ satisfying the criterion above is the maximum grading of an element $a$ in $C$ such that $[U^n a] \neq 0$ for all positive $n$, and the $d$-invariant or \emph{correction term} of $(Y,\mathfrak s)$ is $d(\CFm(Y,s))$ \cite{ozsvath2003absolutely}. From this, one may extract Rasmussen's concordance invariant $V_0$ as $V_0(K) = -\frac{1}{2} d(\CFm(S^3_{+1}(K))$ \cite{ozsvath2008knot, Rasmussen:V0, Peters}. The $d$-invariant is additive under connected sum; that is, 
\begin{equation} \label{eqn:d-additive}
d((Y_1\#Y_2), \mathfrak s_1\# \mathfrak s_2) = d(Y_1, \mathfrak s_1) + d(Y_2, \mathfrak s_2).
\end{equation}

An important special case is that of the lens space $L(p,q)$. The iota-complex associated to $L(p,q)$ in any $\spinc$ structure $\mathfrak s$ is a copy of the pair $(\mathbb F_2[U], \mathrm{Id})$, with the grading of the element $1$ in $\mathbb F_2[U]$ being $d(L(p,q),\mathfrak s)$. Therefore, taking the connected sum with a lens space has the sole effect of imposing a grading shift on $(\CFm(Y), \mathfrak s)$.

We may now define the involutive variants of the $d$-invariant. We follow \cite[Lemma 2.12]{HMZ}, which is a reformulation of the original definitions from \cite{HM}.

\begin{definition}
Let $(C,\iota)$ be an iota-complex. Then $\dl(C,\iota)$ is the maximum grading of a homogeneous cycle $a \in C$ such that $[U^{n}a] \neq 0$ for all positive $n$ and furthermore there exists an element $b$ such that $\partial b= (\mathrm{id} + \iota)a$.
\end{definition}

\begin{definition}
Let $(C,\iota)$ be an iota-complex. Consider triples $(x,y,z)$ consisting of elements of $C$, with at least one of $x$ or $y$ nonzero such that $\partial y = (\mathrm{id} + \iota)x$, $\partial z = U^{m} x$, for some $m \geq 0$ and $[U^n(U^{m}y + (\mathrm{id} + \iota)z)] \neq 0$ for all $n \geq 0$. If $x \neq 0$, assign this triple the value $\gr(x)+1$; if $x=0$, assign this triple the value $\gr(y)$. Then $\du(C,\iota)$ is defined as the maximum of these grading values across all valid triples $(x,y,z)$.
\end{definition}

Given a three-manifold $Y$ with conjugation-invariant $\spinc$-structure $\mathfrak s$, we say that $\dl(Y, \mathfrak s) = \dl(\CFm(Y,\mathfrak s))$ and likewise $\du(Y, \mathfrak s) = \du(\CFm(Y, \mathfrak s)).$ The invariants $\dl$ and $\du$ are not additive under connected sum, but instead, for $\mathfrak s = \mathfrak s_1 \# \mathfrak s_2$, satisfy the following \cite[Proposition 1.3]{HMZ}:
\begin{equation} \label{eqn:not-additive-but}\dl(Y_1,\mathfrak s_1) + \dl(Y_2, \mathfrak s_2) \leq \dl(Y_1\#Y_2,\mathfrak s) \leq \dl(Y_1,\mathfrak s_1) + \du(Y_2, \mathfrak s_2) \leq \du(Y_1\#Y_2,\mathfrak s) \leq \du(Y_1,\mathfrak s_1) + \du(Y_2, \mathfrak s_2).
\end{equation}

We may now define the invariants $\Vl$ and $\Vu$.

\begin{definition}
Let $K$ be a knot in $S^3$. The involutive concordance invariants of $K$ are
\[
\Vl(K):= -\frac{1}{2}\dl(S^3_{+1}(K)), \qquad \Vu(K):= -\frac{1}{2}\du(S^3_{+1}(K)).
\]
\end{definition} 
These invariants also admit a more general relationship to surgeries, which is an extension of a formula for the non-involutive d-invariant proved by Ni and Wu \cite{ni2015cosmetic}. The set of $\spinc$-structures on $p/q$-surgery on $K$ may be identified with $\mathbb{Z}/p\mathbb Z$; in this paper we follow the convention used by \cite{OS11} and \cite{ni2015cosmetic} for this identification. Ni and Wu show that, given $0\leq s\leq p-1$
\begin{equation}\label{eq:NiWu}
d(S^3_{p/q}(K), [s])= d(L(p,q),[s]) - 2\mathrm{max}\{ V_{\left \lfloor \frac{s}{q} \right \rfloor }(K), V_{\left \lfloor \frac{p+q -1-s}{q} \right \rfloor }(K) \}.
\end{equation}
Here $V_i$ represents the $i$th concordance invariant in the sequence defined in \cite{OS11, Rasmussen:V0}, which sequence generalizes $V_0$. In \cite{hendricks2020surgery}, Hom, Stoffregen, and Zemke and the first author prove a surgery formula in involutive Heegaard Floer homology, one of whose consequences is an involutive analog of the relationship above, which we now recall. With respect to the convention above, the conjugation action on $\spinc$-structures is
\begin{equation}\label{equ:spinc_action}
J([i])=[p+q-1-i], \; \text{where} \; [i] \in \mathbb{Z}_p.
\end{equation}
In particular, if $p$ and $q$ are both odd, then $\left[\frac{q-1}{2}\right] \in \mathbb{Z}_p$ represents the unique self-conjugate $\spinc$-structure on the manifold $S^{3}_{p/q}(K)$. When $p$ is even and $q$ is odd, there are two self-conjugate $\spinc$-structures on the manifold $S^{3}_{p/q}(K)$, to wit $\left[\frac{q-1}{2}\right]$ and $\left[\frac{p+q-1}{2}\right]$. Finally, when $p$ is odd and $q$ is even, there is a unique $\spinc$-structure $\left[\frac{p+q-1}{2}\right]$. With this in mind, we have the following.
\begin{theorem}\cite[Proposition 1.7]{hendricks2020surgery} \label{thm:surgery_formula}
Suppose that $q$ is odd, then we have
\[
\dl\left(S^3_{p/q}(K), \left[ \frac{q-1}{2}\right]\right)= d\left(L(p,q),\left[\frac{q-1}{2}\right]\right) - 2\Vl(K)\] \[\du\left(S^3_{p/q}(K), \left[\frac{q-1}{2}\right]\right)= d\left(L(p,q),\left[ \frac{q-1}{2}\right]\right) - 2\Vu(K).
\] 
If one of $p$ or $q$ is even, then
\[
\dl\left(S^3_{p/q}(K), \Big[\frac{p+q-1}{2} \Big]\right)= d\left(S^3_{p/q}(K),\Big[\frac{p+q-1}{2} \Big]\right)\] \[\du\left(S^3_{p/q}(K), \Big[\frac{p+q-1}{2} \Big]\right)= d\left(L(p,q),\Big[\frac{p+q-1}{2} \Big]\right).
\]
\end{theorem}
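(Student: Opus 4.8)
The plan is to feed the full involutive surgery formula of \cite{hendricks2020surgery} into a mapping-cone analysis of the type Ni and Wu used to establish \eqref{eq:NiWu}. Recall that for $p/q>0$ the complex $\CFm(S^3_{p/q}(K),[i])$ is strongly equivalent to a mapping cone $\mathbb{X}^-(p/q,[i])$ built from the large-surgery complexes $A_s=A_s(K)$ and copies $B\simeq\CFm(S^3)$: at each lattice index $t\in\mathbb{Z}$ there is an $A$-summand $A_{\lfloor(i+pt)/q\rfloor}$ and a $B$-summand $B_t$, joined by vertical maps $v$ and diagonal maps $h$, with $v$ (resp.\ $h$) a grading-preserving isomorphism onto the $U$-tower once the relevant $A$-subscript is sufficiently positive (resp.\ negative). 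The knot involution $\iota_K$ promotes this to an iota-complex $(\mathbb{X}^-(p/q,[i]),\iota_{\mathbb{X}})$, in which $\iota_{\mathbb{X}}$ is $\iota_K$ on each $A$-summand composed with the reflection of the lattice induced by the action $J([i])=[p+q-1-i]$ on $\Spinc$-structures. Since $\dl$ and $\du$ depend only on the strong equivalence class of an iota-complex, it suffices to compute them from $(\mathbb{X}^-(p/q,[i]),\iota_{\mathbb{X}})$.

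First I would truncate. Cancelling the acyclic pieces supplied by the $v$'s and $h$'s with extreme subscripts, one passes to a finite subquotient $\mathbb{X}^-_{\mathrm{tr}}$ strongly equivalent to $\mathbb{X}^-(p/q,[i])$ and carrying (after a homotopy) a restriction of $\iota_{\mathbb{X}}$; here one must check that the reflection respects the chosen, symmetric, truncation window. As in Ni--Wu, the $\mathbb{F}_2[U]$-tower of $H_*(\mathbb{X}^-_{\mathrm{tr}})$ is then carried by the summand of smallest $A$-subscript, and the ordinary $d$-invariant equals the bottom grading of that summand plus a $K$-independent shift $\delta$, which the unknot computation identifies with $d(L(p,q),[i])$. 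The new content is how the distinguished summand sits with respect to $\iota_{\mathbb{X}}$.

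The two cases of the theorem diverge according to whether the lattice reflection has a fixed index. If $q$ is odd and $[i]=[\tfrac{q-1}{2}]$, the distinguished summand is $A_0$ (the index $t=0$ term, where $\lfloor(i+pt)/q\rfloor=0$), the reflection fixes $t=0$, and $\iota_{\mathbb{X}}$ restricts on $A_0$ to $\iota_K$. The refinement of Ni--Wu's bookkeeping one needs is that the difference between $\dl(\mathbb{X}^-_{\mathrm{tr}},\iota_{\mathbb{X}})$ and the shift $\delta=d(L(p,q),[\tfrac{q-1}{2}])$ --- and likewise for $\du$ and for the ordinary $d$ --- is determined by the distinguished summand together with its involution alone. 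Since the $p=q=1$ instance (where $\delta=0$) also has distinguished summand $A_0$ with involution $\iota_K$ and by definition computes $\dl(S^3_{+1}(K))=-2\Vl(K)$ and $\du(S^3_{+1}(K))=-2\Vu(K)$, comparing the two computations yields the first pair of formulas. If instead $p$ or $q$ is even and $[i]=[\tfrac{p+q-1}{2}]$, the lattice reflection is fixed-point free (it has the form $t\mapsto c-t$ with $c$ odd), so $\iota_{\mathbb{X}}$ \emph{swaps} the two central $A$-summands; on the $U$-tower it then behaves like the swap involution on $C\oplus C$. In this model $\id+\iota_{\mathbb{X}}$ annihilates the diagonal tower class, which forces $\dl$ to equal the ordinary invariant $d(S^3_{p/q}(K),[\tfrac{p+q-1}{2}])$, while realizing a tower generator as $(\id+\iota_{\mathbb{X}})$ applied to a cycle gives $\du=\delta=d(L(p,q),[\tfrac{p+q-1}{2}])$; tracking the gradings through the swap is what pins the latter to the lens-space value.

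The step I expect to be the main obstacle is nailing down the action of the lattice reflection on the truncated cone --- in particular that it fixes the index $0$ in the $[\tfrac{q-1}{2}]$ case but is fixed-point free in the $[\tfrac{p+q-1}{2}]$ case --- together with verifying that the tower generator of $H_*(\mathbb{X}^-_{\mathrm{tr}})$ really lies in the expected summand at the expected grading and that the homotopy making truncation $\iota_{\mathbb{X}}$-equivariant does not disturb this picture. The remaining work is either a short $\mathbb{F}_2[U]$-module calculation with a swap (or identity) involution, or the non-involutive bookkeeping already carried out in \cite{ni2015cosmetic}.
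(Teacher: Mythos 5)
This theorem is not proved in the paper at all: it is quoted as \cite[Proposition 1.7]{hendricks2020surgery}, and the only original content surrounding it here is the translation between $\spinc$ indexing conventions. Your proposal is therefore not an alternative to an argument the authors give --- it is a reconstruction of the proof from the cited reference, and as such it does track the actual derivation there: one feeds the involutive mapping cone into a Ni--Wu-style truncation, and the two halves of the statement correspond exactly to whether the conjugation reflection of the lattice fixes a central $A_0$ summand (where the involution restricts to $\iota_K$ and comparison with the $+1$-surgery case produces $-2\Vl(K)$ and $-2\Vu(K)$) or swaps two central summands (where the swap forces the degenerate answers). Two caveats keep this from being a proof rather than an outline. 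First, the involution on the mapping cone is not simply ``$\iota_K$ on each $A$-summand composed with the lattice reflection'': it necessarily involves the flip maps identifying $A_s$ with $A_{-s}$ together with correcting homotopies, and constructing this data and proving the surgery formula is the content of the entire paper \cite{hendricks2020surgery}; everything you write is conditional on that machinery. Second, the step you yourself flag --- that $\dl$ and $\du$ of the truncated cone are determined by the distinguished central summand(s) together with the restricted involution alone --- is precisely where the work of deducing Proposition 1.7 from the surgery formula lies: one must show that the inclusion of the central summand is a local equivalence (or prove the two inequalities separately), not merely locate the tower generator, and in the swapped case the computation of $\du$ requires chasing the diagonal class through the $B$-summands rather than a bare $C\oplus C$ model. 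As a sketch of the cited proof your plan is sound; for the purposes of this paper the correct move is the one the authors make, namely to cite the result.
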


Note that the original version of the theorem is stated using a different indexing convention for the $\spinc$ structures, which we have translated into the Ozsv{\'a}th-Szab{\'o} convention here; for more on this, see \cite[Pages 204-205]{hendricks2020surgery}. For the statement of the full surgery formula of which this relationship is a consequence, see \cite[Sections 1.2 and 1.3]{hendricks2020surgery}.

\section{Proofs of the main Theorems} \label{sec:proofs}
\subsection{A cabling formula for the involutive concordance invariants}
In this subsection, we prove Theorems~\ref{thm: intro_formula} and ~\ref{thm: intro_unknotting}. A key ingredient of our proof will be the following classical diffeomorphism (see \cite{Moser:torus} for a first case and \cite[Corollary 7.3]{Gordon:satellite} for the general statement; a summary of the proof can be found in in \cite[Section 2.4]{Hedden_cabling_two}):
\[
S^3_{pq}(K_{p,q}) = S^3_{q/p}(K) \# L(p,q).
\]
Let us now denote the projections of a  $\spinc$-structure to the two summands $ S^3_{q/p}(K)$ and $L(p,q)$ as $\pi_1$ and $\pi_2$ respectively. We may treat these as functions
\[
\pi_1 \colon \mathbb{Z}_{pq} \rightarrow \mathbb{Z}_q, \qquad \pi_2 \colon \mathbb{Z}_{pq } \rightarrow \mathbb{Z}_p.
\]

We record the following lemma, which is a direct consequence of \cite[Lemma 4.1]{Wu_cabling}.

\begin{lemma}\label{lem: spinc_decom}
The self-conjugate $\spinc$-structure $\left[0\right] \in \mathbb{Z}_{pq}$ projects to the summands as follows:
\begin{enumerate}

\item If both $p$ and $q$ are odd, we have
 \[
  \pi_1(0) \equiv \frac{p-1}{2} \; (\mathrm{mod} \; q); \qquad \pi_2(0) \equiv \frac{q-1}{2} \; (\mathrm{mod} \; p).
 \]
 
 \item If $p$ is odd and $q$ is even, 
 \[
 \pi_1(0) \equiv \frac{p-1}{2} \; (\mathrm{mod} \; q); \qquad \pi_2(0) \equiv \frac{p+q-1}{2} \; (\mathrm{mod} \; p).
 \]
 
 \item If $p$ is even and $q$ is odd,
 \[
 \pi_1(0) \equiv \frac{p+q-1}{2} \; (\mathrm{mod} \; q); \qquad \pi_2(0) \equiv \frac{q-1}{2} \; (\mathrm{mod} \; p).
 \]
 
\end{enumerate}\end{lemma}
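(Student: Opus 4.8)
The plan is to deduce this directly from \cite[Lemma 4.1]{Wu_cabling}. Applied to the diffeomorphism $S^3_{pq}(K_{p,q}) = S^3_{q/p}(K) \# L(p,q)$ recalled above, that lemma gives an explicit description of the induced bijection $\mathbb{Z}_{pq} \cong \mathbb{Z}_q \times \mathbb{Z}_p$, that is, affine-linear formulas for the two projections $\pi_1$ and $\pi_2$ in terms of the $\spinc$-label, with the Ozsv\'ath--Szab\'o/Ni--Wu labeling understood on all three manifolds. The first step is therefore to recall those formulas and to verify that Wu's conventions for labeling $\spinc$-structures on surgeries agree with the ones fixed in Section~\ref{sec:background}; if they do not, the translation is carried out using the dictionary of \cite[Pages 204-205]{hendricks2020surgery}.

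The second step is then purely computational: substitute the label $0$ into Wu's formulas and rewrite each of $\pi_1(0) \in \mathbb{Z}_q$ and $\pi_2(0) \in \mathbb{Z}_p$ in the normalized form appearing in the statement. There are exactly three cases, according to the parities of $p$ and $q$ --- the possibility that both are even is ruled out by $(p,q)=1$ --- and these are precisely the three items of the lemma. In each case one checks that the displayed representative ($\tfrac{p-1}{2}$, $\tfrac{q-1}{2}$, or $\tfrac{p+q-1}{2}$) is an honest integer under the relevant parity hypothesis, so the congruences are meaningful, and that it matches the output of Wu's formula modulo $q$ (respectively modulo $p$).

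It is worth noting a consistency check that also locates where the real content lies. Since $[0] \in \mathbb{Z}_{pq}$ is self-conjugate and the connected-sum decomposition of $\spinc$-structures is bijective and conjugation-equivariant, $\pi_1(0)$ and $\pi_2(0)$ must each be self-conjugate on the corresponding summand. Transporting the classification of self-conjugate $\spinc$-structures given before Theorem~\ref{thm:surgery_formula} to $S^3_{q/p}(K)$ and to $L(p,q)$ (via \eqref{equ:spinc_action}), this forces both projections in case (1), forces $\pi_2(0)$ in case (2) and $\pi_1(0)$ in case (3), and in each remaining component narrows the answer to one of two self-conjugate $\spinc$-structures (the summand of even order carries two of them). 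So the only place one genuinely needs Wu's explicit affine formula rather than just self-conjugacy is to break this residual two-fold ambiguity; the rest is convention-tracking, and keeping the several indexing conventions straight is the part I expect to be the main obstacle and the easiest place to slip.
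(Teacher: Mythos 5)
Your proposal matches the paper's proof: the authors likewise quote \cite[Lemma 4.1]{Wu_cabling}, which gives $\pi_1(0)\equiv\pi_2(0)\equiv -\tfrac{(p-1)(q-1)}{2}$ modulo $q$ and $p$ respectively, and then reduce this expression in each parity case by a direct calculation. Your additional self-conjugacy consistency check is a nice sanity test but is not needed; the approach is the same.
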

\begin{proof}
In \cite[Lemma 4.1]{Wu_cabling} it was shown that the projections maps satisfy the following relations
\[
\pi_1(0) \equiv - \frac{(p-1)(q-1)}{2} \; (\mathrm{mod} \; q); \qquad \pi_2(0) \equiv - \frac{(p-1)(q-1)}{2} \; (\mathrm{mod} \; p).
\]
The desired result now follows from a direct calculation.
\end{proof}

We may now prove Theorem~\ref{thm: intro_formula}.

\begin{proof}[Proof of Theorem~\ref{thm: intro_formula}]
We break the proof into three cases, depending on the parity of $p$ and $q$.

\textit{Case 1.} We first consider the case when $p$ and $q$ are both odd. We will give the proof for $\Vl$. The proof for $\Vu$ is identical. 
Per Lemma~\ref{lem: spinc_decom}, the projection of the $[0]$ $\spinc$ structure on $S^3_{pq}(K)$ to $L(p,q)$ has image the unique self-conjugate $\spinc$-structure, represented by
\[
\left[\frac{q-1}{2}\right] \; (\mathrm{mod} \; p).
\]
Similarly, for $q/p$-Dehn surgery with $q$ odd, the projection of the $[0]$ $\spinc$ structure on $S^3_{pq}(K)$ to $S^3_{q/p}(K)$ has image the unique self-conjugate $\spinc$-structure, namely
\[
\left[\frac{p-1}{2}\right] \; (\mathrm{mod} \; q).
\]
Now applying Lemma~\ref{lem: spinc_decom} and \eqref{eqn:connect_sum} to the case that both $p$ and $q$ are odd, we we see that the iota-complex $(\CFm(S^3_{pq}(K_{p,q}),[0]), \iota)$ is strongly equivalent to the iota-complex $\left(\CFm\left(S^3_{q/p}(K),\left[\frac{p-1}{2}\right]\right),\iota\right)$ shifted in grading by $d\left(L(p,q),\left[\frac{q-1}{2}\right]\right)$.  It therefore follows that 
\begin{equation}\label{equ:dl_1}
\dl(S^3_{pq}(K_{p,q}),[0])= \dl\left(S^3_{q/p}(K), \Big[ \frac{p-1}{2} \Big]\right) + d\left(L(p,q),\Big[\frac{q-1}{2} \Big]\right).
\end{equation}
Applying Theorem~\ref{thm:surgery_formula}, we get that 

\begin{equation}\label{equ:surgery_formula_1}
\dl(S^3_{pq}(K_{p,q}),[0])= d(L(pq,1),[0]) -2\Vl(K_{p,q})
\end{equation}
and also that
\begin{equation}\label{equ:surgery_formula_2}
\dl\left(S^3_{q/p}(K),\Big[\frac{p-1}{2} \Big]\right) = d\left(L(q,p),\Big[\frac{p-1}{2}\Big]\right) - 2\Vl(K).
\end{equation}
Combining Equations~\eqref{equ:surgery_formula_1} and \eqref{equ:surgery_formula_2} with Equation~\eqref{equ:dl_1} we get 
\begin{equation}\label{equ:combine}
 d(L(pq,1),[0]) -2\Vl(K_{p,q})=d\left(L(q,p),\Big[\frac{p-1}{2} \Big]\right) - 2\Vl(K) + d\left(L(p,q),\Big[\frac{q-1}{2} \Big]\right).
\end{equation}
Now by plugging in the unknot for $K$ in Equation~\ref{equ:combine}, we get
\begin{gather}\label{equ:unknot}
d(L(pq,1),[0]) -2\Vl(T_{p,q}) = d\left(L(q,p),\Big[\frac{p-1}{2} \Big]\right) +  d\left(L(p,q),\Big[\frac{q-1}{2} \Big]\right).
\end{gather}
Substituting Equation~\eqref{equ:unknot} into Equation~\eqref{equ:combine} and observing that $\Vl(T_{p,q})= V_0(T_{p,q})$ \cite[Section 7]{HM} we get the desired equality. The proof for $\Vu$ is similar.

\textit{Case 2.} Let us now consider the case when $p$ is odd and $q$ is even. We first consider $\Vl$. The two self-conjugate $\spinc$-structures on $S^3_{q/p}(K)$ are given by
\[
\left[\frac{p-1}{2}\right] \; (\mathrm{mod} \; q) \; \text{ and } \; \left[\frac{p+q-1}{2}\right] \; (\mathrm{mod} \; q).
\]
On the other hand, on $L(p,q)$ there is only one self-conjugate $\spinc$ structure, to wit
\[
\left[\frac{p+q-1}{2}\right] (\mathrm{mod} \; p).
\]
Now, in this case, Lemma~\ref{lem: spinc_decom} implies
\begin{equation}\label{equ:dl_3}
\dl(S^3_{pq}(K_{p,q}),[0])= \dl\left(S^3_{q/p}(K), \Big[ \frac{p-1}{2} \Big]\right) + d\left(L(p,q),\Big[\frac{p+q-1}{2} \Big]\right).
\end{equation}
Again applying Theorem~\ref{thm:surgery_formula}, we also have
\begin{equation}\label{equ:surgery_formula_4}
\dl(S^3_{pq}(K_{p,q}),[0])= d(L(pq,1),[0]) -2\Vl(K_{p,q})
\end{equation}
and
\begin{equation}\label{equ:surgery_formula_5}
\dl\left(S^3_{q/p}(K),\Big[\frac{p-1}{2} \Big]\right) = d\left(L(q,p),\Big[\frac{p-1}{2}\Big]\right) - 2\Vl(K).
\end{equation}
The rest of the proof is identical to that for the case when $p$ and $q$ are both odd. The proof for $\Vu$ is also similar.

\textit{Case 3.} We are left to consider the case when $p$ is even. In this case by Lemma~\ref{lem: spinc_decom} we have
\begin{equation}\label{eq:finalspinc}\dl(CF(S^3_{pq}(K_{p,q}),[0])) = \dl\left(S^3_{q/p}(K), \Big[ \frac{p+q-1}{2} \Big]\right) + d\left(L(p,q),\Big[\frac{q-1}{2} \Big]\right).\end{equation}
and from Theorem~\ref{thm:surgery_formula} we still have
\begin{equation}\label{eq:usual}\dl(CF(S^3_{pq}(K_{p,q}),[0])) = d(L(pq,1),[0]) - 2\Vl(K). \end{equation}
However, in this case our second consequence of Theorem~\ref{thm:surgery_formula} is of the form
\begin{equation}\label{equ:surgery_formula_6}
\dl\left(S^3_{q/p}(K),\Big[\frac{p+q-1}{2} \Big]\right) = d\left(S^3_{q/p}(K),\Big[\frac{p+q-1}{2}\Big]\right).
\end{equation}
Now let us choose $0 \leq s \leq q-1$ such that
\[
s \equiv \frac{p+q-1}{2} \; (\mathrm{mod} \; q).
\]
Applying the Ni-Wu formula \eqref{eq:NiWu} , we get
\begin{equation}\label{equ:surgery_formula_7}
d\left(S^3_{q/p}(K),\Big[\frac{p+q-1}{2}\Big]\right) = d\left(L(q,p),\Big[\frac{p+q-1}{2}\Big]\right) - 2 \; \mathrm{max}\left \{ V_{\left \lfloor \frac{s}{p} \right \rfloor }(K), V_{\left \lfloor \frac{p+q -1-s}{p} \right \rfloor }(K) \right \}.
\end{equation}
Combining Equations~\eqref{eq:usual}, \eqref{eq:finalspinc}, \eqref{equ:surgery_formula_6} and \eqref{equ:surgery_formula_7} and following the similar steps as above gives us the desired equality for $\Vl$.

As $\Vl$ and $\Vu$ have significantly different expressions in this final case, we also comment briefly on the argument for $\Vu$. The analog of Equation~\eqref{equ:surgery_formula_6} is
\begin{equation}\label{equ:surgery_formula_8}
\du\left(S^3_{q/p}(K),\Big[\frac{p+q-1}{2} \Big]\right) = d\left(L(q,p),\Big[\frac{p+q-1}{2}\Big]\right).
\end{equation}
Therefore following similar steps as for the $\Vl$ case, we get
\[
\Vu(K_{p,q}) = V_0(T_{p.q}).
\]
This completes the proof.
\end{proof}
The proof of Corollary~\ref{cor:intro_ite_cable} now follows.
\begin{proof}[Proof of Corollary~\ref{cor:intro_ite_cable}]
We may apply Theorem~\ref{thm: intro_formula} recursively.
\end{proof}

\subsection{Unknotting number bound}
In this subsection, we produce a bound for the unknotting number, using Theorem~\ref{thm: intro_formula}.

\begin{proof}[Proof of Theorem~\ref{thm: intro_unknotting}]
In \cite[Theorem 1.7]{JZ:clasp} Juh{\'a}sz-Zemke proved a bound for the slice genus using the involutive concordance invariants. Specifically, they showed
\begin{equation}
-\left\lceil \frac{g_{4}(K) + 1}{2} \right\rceil \leq \Vu(K) \leq \Vl(K) \leq \left\lceil \frac{g_{4}(K) + 1}{2} \right\rceil.
\end{equation}
Now observe that
\[
g_4(K) \leq u(K).
\]
Hence, we immediately get the desired inequality by replacing $K$ with $K_{p,q}$ and applying Theorem~\ref{thm: intro_formula} for $p$ odd.
\end{proof}

\section{Examples} \label{sec:examples}
We now discuss an example for which our cabling formula improves on other bounds from knot Floer homology.

\begin{example} \label{example:beats-previous}
We consider the knot $K:= -2 T_{6,7} \# T_{6,13}$. It was shown in \cite[Section 4]{hendricks2020surgery} that the knot Floer chain complex of $K$ splits into equivariant summands one of which is the tensor product of $\mathbb F[U,U^{-1}]$ with the complex $C$ in Figure \ref{fig:cfk} in such a way that $\Vl(K)$ and $\Vu(K)$ are equal to the involutive concordance invariants of the complex in Figure \ref{fig:cfk} with the action of $\iota$ shown; in the language of the literature, the two complexes are \emph{locally equivalent}. Indeed, one may compute $\Vl$ and $\Vu$ of the knot $K$ by computing $\dl$ and $\du$ of a suitable subcomplex of $C \otimes \mathbb F[U,U^{-1}]$; for more on this computation, see \cite[Section 6]{HM} and \cite[Introduction]{hendricks2020surgery}.

\begin{figure}[h!]
\center
\includegraphics[scale=0.7]{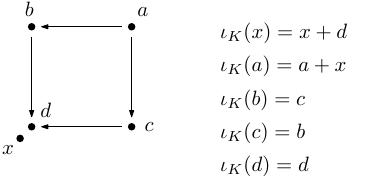}
\caption{The knot Floer chain complex of the knot $K$, up to local equivalence. The arrows are of length three. The action of $\iota$ is shown in the right}\label{fig:cfk}
\end{figure}

In particular, computation shows that $\Vl(K)=3$. Now the unknotting number bound for $K_{3,2}$  from Theorem~\ref{thm: intro_unknotting} gives
\[
u(K_{3,2}) \geq 2 \Vl(K) + 2 - 2 = 6 
\]
Note that slice genus, and hence unknotting number, bounds for $K_{3,2}$ coming from the Ozsv{\'a}th-Szab{\'o} $\tau$-invariant, from the $V_0$-invariant, and from the $\nu^{+}$ invariant \cite{Wu_cabling} are all 1. Finally, observe that the unknotting number bound from \cite{HLP:cables} shows that $u(K_{3,2}) \geq 3$. 

A similar argument applies to the general situation of the $(n,2)$-cables of the knots $K_n = -2T_{2n,2n+1}\#T_{2n,4n+1}$ for $n$ odd considered in \cite{HHSZ:quotient}, which have chain complexes locally equivalent to the analog of the complex in Figure~\ref{fig:cfk} with length $n$ arrows in the box, and have $\Vl(K_n) = n$.
\end{example}

\bibliographystyle{amsalpha}
\bibliography{bib}

\end{document}